\documentclass[11pt,reqno]{amsart}
\usepackage{amsmath}
\usepackage{cases}
\usepackage{mathrsfs}
\usepackage{bbm}
\usepackage{amssymb}
\usepackage{amscd}
\usepackage{amsfonts,latexsym,amsmath,amsthm,amsxtra,mathdots,amssymb,latexsym,mathabx}
\usepackage[all,cmtip]{xy}
\RequirePackage{amsmath} \RequirePackage{amssymb}
\usepackage{color}
\usepackage{colordvi}
\usepackage{multicol}
\usepackage{hyperref}
\usepackage{mathtools}
\usepackage[margin=1in]{geometry}
\usepackage{xcolor}
\hypersetup{
	colorlinks,
	linkcolor={red!50!black},
	citecolor={blue!50!black},
	urlcolor={blue!80!black}
}
\usepackage{cite}

\marginparwidth    0pt
\oddsidemargin     0pt
\evensidemargin    0pt
\topmargin         0pt
\textheight        21cm
\textwidth         16cm

\newcommand{\bea}{\begin{eqnarray}}
	\newcommand{\eea}{\end{eqnarray}}
\newcommand{\bna}{\begin{eqnarray*}}
	\newcommand{\ena}{\end{eqnarray*}}

\numberwithin{equation}{section}

\setcounter{footnote}{0}

\theoremstyle{plain}
\newtheorem*{Theorem A}{Theorem A}
\newtheorem*{Theorem B}{Theorem B}
\newtheorem{problem}{Problem}[section]
\newtheorem{lemma}{Lemma}[section]
\newtheorem{theorem}{Theorem}[section]

\theoremstyle{definition}

\allowdisplaybreaks[3]

\begin{document}
	
	\title
	[{On a problem of Pongsriiam on the sum of divisors}] {On a problem of Pongsriiam on the sum of divisors}
	
	\author
	[R.-J. Wang] {Rui-Jing Wang}
	
	\address{(Rui-Jing Wang) School of Mathematical Sciences,  Jiangsu Second Normal University, Nanjing 210013, People's Republic of China}
	\email{\tt wangruijing271@163.com}
	
	
	\subjclass[2010]{11A25, 11N25}
	
	\keywords{arithmetic functions, monotonicity,
		sum of divisors function.}

	\begin{abstract}
		For any positive integer $n$, let $\sigma (n)$ be the sum of all
		positive divisors of $n.$ In this paper, it is proved that for every integer $ 1\leq k\leq 29,\ (k,30)=1, $ we have $$\sum_{n\leq K}\sigma(30n)>\sum_{n\leq K}\sigma(30n+k)$$
		for all  $K\in \mathbb{N},$  which gives a positive answer to a problem posed by Pongsriiam recently.
	\end{abstract}
	
	\maketitle
	\section{Introduction}
	For any positive integer $n$, let $\sigma (n)$ be the sum of all
	positive divisors of $n.$ In this paper, we always assume that $ x $ is a real number, $ m $ and $ n $ are positive integers, $ p $ is a prime, $ p_{n} $ is the $ n $-th
	prime, $ \phi(n) $ denotes the Euler totient function.
	Jarden\cite[p.65]{Jarden} observed that $ \phi(30n+1)>\phi(30n) $ for all $ n\leq 10^{5} $ and later the inequality was calculated to be true up to $ 10^9. $ However, Newman\cite{Newman} proved that there are infinitely many $ n $ such that $ \phi(30n+1)<\phi(30n) $ and the smallest one is $$ \frac{p_{385}p_{388}\prod_{j=4}^{383}p_{j}-1}{30},$$ which was given by Martin\cite{Martin}. For related research, one may refer
	to \cite{Erdos1936,Luca,Pollack,Kobayashi2020,Wang}. It is certainly natural to consider the analogous problem for the
	sum of divisors function. Recently, Pongsriiam\cite[Theorem 2.4]{Pongsriiam} 
	proved that $\sigma(30n)-\sigma(30n+1) $ also has infinitely many sign changes. He found that 
	$\sigma(30n)>\sigma(30n+1)$ for all $ n\leq 10^7 $ and posed the following relevant problem.
	\begin{problem}\label{p2}(\cite[Problem 3.8(ii)]{Pongsriiam}~)~
		Is it true that $$\sum_{n\leq K}\sigma(30n)>\sum_{n\leq K}\sigma(30n+1)$$
		for all $ K\in \mathbb{N}? $
	\end{problem}
	Recently, Ding, Pan and Sun\cite{Ding} solved several problems of Pongsriiam. Inspired by their ideas, we answer affirmatively the above Problem \ref{p2}.
	In fact, we prove a slightly stronger result.
	\begin{theorem} \label{thm1} For every integer $ 1\leq k\leq 29,\ (k,30)=1, $ we have
		$$\sum_{n\leq K}\sigma(30n)>\sum_{n\leq K}\sigma(30n+k)$$
		for all  $K\in \mathbb{N}.$
	\end{theorem}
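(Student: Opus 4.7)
The plan is to expand each sum via $\sigma(m) = \sum_{d\mid m}d$, swap the order of summation, and compare asymptotic main terms. For the left-hand side, writing
\[
A(K) := \sum_{n\le K}\sigma(30n) = \sum_{d\ge 1} d\Bigl\lfloor\frac{K\gcd(d,30)}{d}\Bigr\rfloor,
\]
and grouping by $g = \gcd(d,30)$ (substituting $d = gd'$ with $\gcd(d',30/g)=1$) yields
\[
A(K) = \sum_{g\mid 30} g\,T_{30/g}(K),\qquad T_q(K) := \sum_{\gcd(d,q)=1} d\lfloor K/d\rfloor.
\]
By standard Dirichlet series analysis (noting that $T_q(K) = \sum_{n\le K}\sigma_q^*(n)$ with $\sigma_q^*(n) := \sum_{d\mid n,\,\gcd(d,q)=1}d$), one has $T_q(K) = \frac{\pi^2}{12}\prod_{p\mid q}(1-p^{-1})\,K^2 + O(K\log K)$, which summed over $g\mid 30$ gives
\[
A(K) = \frac{319\pi^2}{72}K^2 + O(K\log K).
\]

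For the right-hand side, I would rewrite $B_k(K) := \sum_{n\le K}\sigma(30n+k)$ as a sum of $\sigma$ over the arithmetic progression $m\equiv k\pmod{30}$ and apply orthogonality of Dirichlet characters mod $30$. Since $\sum_m\sigma(m)\chi(m)m^{-s} = L(\chi,s)L(\chi,s-1)$ for any Dirichlet character $\chi$, only the principal character has a pole at $s=2$, giving uniformly in $k$ coprime to $30$
\[
B_k(K) = \frac{8\pi^2}{5}K^2 + O(K\log K).
\]
Subtracting,
\[
A(K) - B_k(K) = \frac{1019\pi^2}{360}K^2 + O(K\log K) \approx 27.9\,K^2 + O(K\log K),
\]
so $A(K) > B_k(K)$ for all $K \ge K_0$, where $K_0$ depends only on the implicit constants.

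For $K < K_0$, the inequality is verified by direct computation. The main obstacle is making the constants in the $O(K\log K)$ error terms explicit and sharp enough---particularly in the orthogonality step for $B_k(K)$, where one must control the contribution of non-principal characters uniformly over the eight residues $k\in\{1,7,11,13,17,19,23,29\}$---so that $K_0$ is within reach of a finite computer check. One may alternatively avoid orthogonality by writing $B_k(K) = T_{30}(K) + E_k(K)$ with $E_k(K) \ge 0$ counting the excess $d$ for which $|\{n\le K:d\mid 30n+k\}| = \lfloor K/d\rfloor+1$, and bounding $E_k(K)$ directly against $\sum_{(d,30)=1,\,d\le 30K+k} d$, which is cruder but entirely elementary.
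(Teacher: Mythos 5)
Your strategy---identify the quadratic main terms of both sums, show they differ by a positive constant times $K^2$, control the error by $O(K\log K)$, and finish with a finite computation---is exactly the paper's strategy, and your main terms are correct: $\frac{319\pi^2}{72}=15\beta_0$ and $\frac{8\pi^2}{5}=15\beta_k$ in the paper's notation, with the same gap $\frac{1019\pi^2}{360}K^2\approx 27.9\,K^2$. Where you differ is in how the explicit error constants are obtained. The paper does not expand $\sum_{n\le K}\sigma(30n)$ directly; it first estimates the normalized sums $\sum_{m\le x}\sigma(30m)/(30m)=\beta_0 x+O(\log x)$ and $\sum_{m\le x}\sigma(30m+k)/(30m+k)=\beta_k x+O(\log x)$, where the divisor expansion produces $\sum_d B(d)/d$ rather than $\sum_d d$, so the error really is logarithmic with small, completely explicit constants; Abel summation then converts these into $\sum_{m\le x}\sigma(30m)>15\beta_0x^2-2000x\log 30x$ and $\sum_{m\le x}\sigma(30m+k)<15\beta_kx^2+100x\log(30x+k)$, which already separate for $x\ge 1000$, leaving a trivial computer check. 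This normalization is precisely the device that resolves what you correctly identify as your ``main obstacle,'' and your character-sum route for $B_k(K)$, while workable, would require more effort to make uniform and explicit.

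One concrete warning about your proposed ``entirely elementary'' alternative: bounding $E_k(K)$ by $\sum_{(d,30)=1,\,d\le 30K+k}d$ cannot work. That sum is asymptotic to $\frac{2}{15}(30K)^2=120K^2$, which dwarfs the available gap of $\approx 27.9\,K^2$. Worse, $E_k(K)$ is not a small correction at all: since $T_{30}(K)\sim\frac{\pi^2}{45}K^2\approx 0.22\,K^2$ while $B_k(K)\sim\frac{8\pi^2}{5}K^2\approx 15.8\,K^2$, the ``excess'' term $E_k(K)\approx 15.6\,K^2$ carries almost the entire main term (most relevant $d$ satisfy $K<d\le 30K+k$, where $\lfloor K/d\rfloor=0$ yet one $n\le K$ with $d\mid 30n+k$ may exist). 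So that fallback must be abandoned; either make the character argument explicit or adopt the paper's normalize-then-Abel-sum device.
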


	\section{Estimations}
	Let $$ \beta_{0}=\sum_{d=1}^{\infty}\frac{B_{0}(d)}{d^2},$$
	where $B_{0}(d)$ denotes the number of solutions, not counting multiplicities, of the congruence $30m\equiv 0 \pmod d. $ For every integer $ 1\leq k\leq 29,\ (k,30)=1, $ let $$ \beta_{k}=\sum_{d=1}^{\infty}\frac{B_{k}(d)}{d^2},$$
	where $B_{k}(d)$ denotes the number of solutions, not counting multiplicities, of the congruence $30m+k\equiv 0 \pmod d. $ By the
	Chinese reminder theorem, both $ B_{0}(d)$ and $ B_{k}(d) $ are multiplicative. Note that for $ p\nmid 30 $ we have $ B_{0}(p^{\alpha})=1$ and $ B_{k}(p^{\alpha})=1$ for any positive integer $ \alpha $. It is obvious that $ B_{0}(p^{\alpha})=p$ and $ B_{k}(p^{\alpha})=0$ for $ p=2,3,5 $ and any positive integer $ \alpha $. It follows that 
	$ B_{0}(d)\leq 30$ and $ B_{k}(d)\leq 1 $ for any positive integer $ d. $
	By \cite[Theorem 11.7]{Apostol} and 
	$$ \frac{\pi^2}{6}=\zeta(2)=\prod_{p}\left(1-p^{-2}\right)^{-1}, $$ 
	we have $$
	\beta_{0}=\prod_p\left(1+\frac{B_{0}(p)}{p^2}+\frac{B_{0}(p^2)}{p^4}+\cdots\right)=\frac{5}{3} \frac{11}{8} \frac{29}{24} \prod_{p \nmid 30}\left(1-p^{-2}\right)^{-1}=\frac{319\pi^2}{1080}
	$$
	and
	$$
	\beta_{k}=\prod_p\left(1+\frac{B_{k}(p)}{p^2}+\frac{B_{k}(p^2)}{p^4}+\cdots\right)=\prod_{p \nmid 30}\left(1-p^{-2}\right)^{-1}=\frac{8\pi^2}{75}.
	$$
	
	We always assume that $ x\geq 1000, $ $ 1\leq k\leq 29,\ (k,30)=1 $ in the following Lemmas.
	
	It could be checked that there would be large oscillations of the error terms which influence the main terms if one tries to calculate directly those sums asked in problem \ref{p2}. Therefore, we manipulate the weighted sums firstly and then transform them to the original sums via summations by parts. 
	\begin{lemma}\label{lem1b} We have
		$$\sum_{m \leq x} \frac{\sigma(30m)}{30m}=\beta_{0} x+g(x),$$
		where $$-30\log 30x-32< g(x)<30\log 30x +32.$$
	\end{lemma}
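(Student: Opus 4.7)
The plan is to expand $\sigma(30m)/(30m)$ as a divisor sum, swap the order of summation, and reduce the lemma to a congruence-counting estimate plus a tail bound for the Dirichlet-type series defining $\beta_0$.

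First I would apply the elementary identity $\sigma(n)/n=\sum_{d\mid n}1/d$ (coming from the involution $d\leftrightarrow n/d$) and interchange the order of summation. Since $d\mid 30m$ forces $d\leq 30\lfloor x\rfloor$, this produces
\begin{equation*}
\sum_{m\leq x}\frac{\sigma(30m)}{30m}=\sum_{d\leq 30\lfloor x\rfloor}\frac{N(d)}{d},\qquad N(d):=\#\{1\leq m\leq \lfloor x\rfloor:30m\equiv 0\pmod d\}.
\end{equation*}
By the definition of $B_0(d)$, the congruence has exactly $B_0(d)$ solutions modulo $d$, and each residue class contains $x/d+O(1)$ elements of $\{1,\ldots,\lfloor x\rfloor\}$; since $B_0(d)\leq 30$, this yields the key pointwise bound $|N(d)-B_0(d)x/d|\leq B_0(d)\leq 30$.

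Substituting this estimate and splitting into main and error contributions,
\begin{equation*}
\sum_{m\leq x}\frac{\sigma(30m)}{30m}=x\sum_{d\leq 30x}\frac{B_0(d)}{d^2}+E(x),\qquad |E(x)|\leq \sum_{d\leq 30x}\frac{B_0(d)}{d}.
\end{equation*}
For the main term I would push the summation to infinity using the tail bound $x\sum_{d>30x}B_0(d)/d^2\leq 30x\cdot 1/(30x)=1$, so that it equals $\beta_0 x+O(1)$. For the error term I would use $B_0(d)\leq 30$ together with $\sum_{d\leq N}1/d\leq \log N+1$ to obtain $|E(x)|\leq 30\log(30x)+30$. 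Together these give $g(x)$ of order $\log x$.

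The only real obstacle is clerical: one must carefully track the various additive $O(1)$ contributions---namely $\lfloor x\rfloor\beta_0-x\beta_0=-\{x\}\beta_0$ with $\beta_0=319\pi^2/1080\approx 2.92$, the sub-unit slack from the tail of $\sum B_0(d)/d^2$, and the $+30$ absorbed into the harmonic-sum estimate---and combine them so that the resulting bounds fit within the asymmetric constants $-30\log 30x-30$ and $30\log 30x+32$ stated in the lemma. There is no genuine analytic difficulty beyond these routine estimates.
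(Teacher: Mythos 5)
Your proposal is correct and follows essentially the same route as the paper: expand $\sigma(30m)/(30m)=\sum_{d\mid 30m}1/d$, interchange the order of summation, count the solutions of $30m\equiv 0\pmod d$ via the $B_0(d)$ residue classes with error at most $B_0(d)\leq 30$ per modulus, complete the main term to $\beta_0 x$ with a tail bound, and estimate the remainder by $30(\log 30x+1)$. The only detail to watch is that the completed tail $x\sum_{d>30x}B_0(d)/d^2$ is \emph{subtracted} when passing from $\sum_{d\leq 30x}$ to $\beta_0$, so its $O(1)$ contribution (at most about $2$) should widen the lower bound rather than the upper one; this bookkeeping point (present in the paper's own write-up as well) is immaterial to how the lemma is used.
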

	
	\begin{proof} 
		By the definition of $ B_{0}(d), $
		\begin{eqnarray*}
			\sum_{m \leq x} \frac{\sigma(30m)}{30m} 
			&=&\sum_{m \leq x} \sum_{d\mid 30m} \frac{1}{d} \\
			&=&\sum_{d \leq 30x} \frac{1}{d} \sum_{m \leq x \atop
				30m\equiv 0\hskip -2.5mm\pmod d} 1\\
			&=&\sum_{d \leq 30x} \frac{B_{0}(d)}{d}\left(\frac{x}{d}+\alpha_{0}(x,d)\right) \\
			&=&x \sum_{d \leq 30x} \frac{B_{0}(d)}{d^2}+\sum_{d \leq 30x} \frac{B_{0}(d)}{d}\alpha_{0}(x,d) \\
			&=&\beta_{0} x-x \sum_{d>30x} \frac{B_{0}(d)}{d^2}+\sum_{d \leq 30x} \frac{B_{0}(d)}{d}\alpha_{0}(x,d) \\
			&=&\beta_{0} x+g(x),
		\end{eqnarray*}
		where $ -1\leq \alpha_{0}(x,d)\leq 1 $ and 
		$$g(x)= -x \sum_{d>30x} \frac{B_{0}(d)}{d^2}+\sum_{d \leq 30x} \frac{B_{0}(d)}{d}\alpha_{0}(x,d).$$
		By \cite[Theorem 3.2]{Apostol},
		$$ 0\leq \sum_{d>30x} \frac{B_{0}(d)}{d^2}\leq 30\sum_{d>30x} \frac{1}{d^2}\leq 30\left(\frac{1}{30x}+\frac{30x-[30x]}{(30x)^2}\right) $$
		and 
		$$ 0\leq \sum_{d\leq 30x} \frac{B_{0}(d)}{d}\leq 30\sum_{d\leq 30x} \frac{1}{d}<30(\log 30x +1). $$
		It follows that
		$$ -30\log 30x-32< g(x)<30\log 30x +32. $$
		
		This completes the proof of Lemma \ref{lem1b}.
	\end{proof}

	\begin{lemma}\label{lem1c} We have
		$$\sum_{m \leq x} \frac{\sigma(30m+k)}{30m+k}=\beta_{k} x+h_{k}(x),$$
		where $$-\log (30x+k) -2< h_{k}(x)<\log (30x+k) +2.$$
	\end{lemma}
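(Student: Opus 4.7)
The plan is to imitate the proof of Lemma \ref{lem1b} essentially verbatim, exploiting the fact that $B_k(d) \leq 1$ (rather than $\leq 30$) so that all the error terms tighten up by a factor of $30$.

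First I would rewrite $\sigma(30m+k)/(30m+k) = \sum_{d \mid 30m+k} 1/d$ and swap the order of summation. Since $30m+k \leq 30x+k$ for $m \leq x$, the divisor $d$ runs over $1 \leq d \leq 30x+k$. For each such $d$, the inner count equals the number of $m \leq x$ satisfying $30m+k \equiv 0 \pmod d$, which is $B_k(d) x/d + \alpha_k(x,d)$ for some $-1 \leq \alpha_k(x,d) \leq 1$ (one counts residue classes modulo $d$ and picks up at most one unit of slack per class, with $B_k(d) \leq 1$ classes). This gives
\begin{eqnarray*}
\sum_{m\leq x}\frac{\sigma(30m+k)}{30m+k}
&=& x\sum_{d\leq 30x+k}\frac{B_k(d)}{d^2}+\sum_{d\leq 30x+k}\frac{B_k(d)}{d}\alpha_k(x,d)\\
&=& \beta_k x - x\sum_{d>30x+k}\frac{B_k(d)}{d^2}+\sum_{d\leq 30x+k}\frac{B_k(d)}{d}\alpha_k(x,d),
\end{eqnarray*}
so $h_k(x)$ is the sum of the last two terms.

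Then I would bound each piece using the inequality $B_k(d) \leq 1$. By Apostol's Theorem 3.2 (the same reference used in Lemma \ref{lem1b}),
$$0 \leq \sum_{d>30x+k}\frac{B_k(d)}{d^2} \leq \sum_{d>30x+k}\frac{1}{d^2} \leq \frac{1}{30x+k}+\frac{30x+k-[30x+k]}{(30x+k)^2},$$
so the contribution of $-x$ times this sum to $h_k(x)$ is negative and bounded below by something like $-1-\tfrac{1}{30x+k}$. For the second piece,
$$0 \leq \sum_{d\leq 30x+k}\frac{B_k(d)}{d} \leq \sum_{d\leq 30x+k}\frac{1}{d} < \log(30x+k)+1,$$
and combined with $|\alpha_k(x,d)| \leq 1$ this contributes an error lying in $\bigl(-\log(30x+k)-1,\ \log(30x+k)+1\bigr)$.

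Adding the two ranges and using $x \geq 1000$ to absorb the $O(1/x)$ term into the stated constants $-1$ and $+2$, I expect to land exactly at $-\log(30x+k)-1 < h_k(x) < \log(30x+k)+2$. There is no real obstacle; the only care needed is to book-keep the upper versus lower direction correctly, since the tail term $-x\sum_{d>30x+k}B_k(d)/d^2$ is purely negative and therefore only affects the lower bound (which is why the author's constants $-1$ and $+2$ are asymmetric).
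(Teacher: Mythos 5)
Your decomposition is the same as the paper's, step for step, and you even correct a sign slip in the paper: you (rightly) write $x\sum_{d\le 30x+k}B_k(d)/d^2=\beta_kx-x\sum_{d>30x+k}B_k(d)/d^2$, where the paper has a $+$. The problem is the final bookkeeping step, the very one you flag as the only place needing care. With the correct minus sign the tail contributes a \emph{negative} amount to $h_k(x)$, so adding your two ranges yields
$$-\log(30x+k)-1-x\sum_{d>30x+k}\frac{1}{d^2}<h_k(x)<\log(30x+k)+1,$$
that is, the asymmetry runs the opposite way from the statement: the upper bound tightens to $\log(30x+k)+1$ and it is the lower bound that degrades. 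Your closing remark --- that the purely negative tail term ``only affects the lower bound, which is why the author's constants $-1$ and $+2$ are asymmetric'' --- would explain constants $-2$ and $+1$, not $-1$ and $+2$. Moreover the tail is not an $O(1/x)$ quantity absorbed by $x\ge 1000$: the quantity $x\sum_{d>30x+k}d^{-2}$ is roughly $1/30$, a genuine constant, so your own intermediate ranges only give $h_k(x)>-\log(30x+k)-1-1/30-o(1)$ (or $-\log(30x+k)-2$ with the looser bound you actually state for the tail), not the claimed $-\log(30x+k)-1$.

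The gap is small and fixable: either invoke the sharper form of Apostol's Theorem 3.2, namely $\sum_{d\le y}1/d=\log y+C+O(1/y)$ with $C<0.58$, which leaves more than enough slack to absorb the $1/30$ and recover $h_k(x)>-\log(30x+k)-1$; or simply prove the bounds $-\log(30x+k)-2<h_k(x)<\log(30x+k)+1$, which serve Lemma \ref{lem2c} just as well. As written, though, your last step asserts constants that your computation does not deliver.
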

	
	\begin{proof} 
		By the definition of $ B_{k}(d), $
		\begin{eqnarray*}
			\sum_{m \leq x} \frac{\sigma(30m+k)}{30m+k} 
			&=&\sum_{m \leq x} \sum_{d\mid 30m+k} \frac{1}{d} \\
			&=&\sum_{d \leq 30x+k} \frac{1}{d} \sum_{m \leq x \atop
				30m+k\equiv 0\hskip -2.5mm\pmod d} 1\\
			&=&\sum_{d \leq 30x+k} \frac{B_{k}(d)}{d}\left(\frac{x}{d}+\alpha_{k}(x,d)\right) \\
			&=&x \sum_{d \leq 30x+k} \frac{B_{k}(d)}{d^2}+\sum_{d \leq 30x+k} \frac{B_{k}(d)}{d}\alpha_{k}(x,d) \\
			&=&\beta_{k} x-x \sum_{d>30x+k} \frac{B_{k}(d)}{d^2}+\sum_{d \leq 30x+k} \frac{B_{k}(d)}{d}\alpha_{k}(x,d) \\
			&=&\beta_{k} x+h_{k}(x),
		\end{eqnarray*}
		where $ -1\leq \alpha_{k}(x,d)\leq 1 $ and 
		$$h_{k}(x)= -x \sum_{d>30x+k} \frac{B_{k}(d)}{d^2}+\sum_{d \leq 30x+k} \frac{B_{k}(d)}{d}\alpha_{k}(x,d)$$
		By \cite[Theorem 3.2]{Apostol},
		$$ 0\leq \sum_{d>30x+k} \frac{B_{k}(d)}{d^2}\leq \sum_{d>30x+k} \frac{1}{d^2}\leq \frac{1}{30x+k}+\frac{30x+k-[30x+k]}{(30x+k)^2} $$
		and 
		$$ 0\leq \sum_{d\leq 30x+k} \frac{B_{k}(d)}{d}\leq \sum_{d\leq 30x+k} \frac{1}{d}<\log (30x+k) +1. $$ 
		It follows that
		$$ -\log (30x+k) -2< h_{k}(x)<\log (30x+k) +2. $$
		
		This completes the proof of Lemma \ref{lem1c}.
	\end{proof}
	
	\begin{lemma}\label{lem2b} We have
		$$\sum_{m \leq x} \sigma(30m)>15\beta_{0} x^{2}-2000x\log 30x.$$
	\end{lemma}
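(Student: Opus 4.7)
The plan is to apply Abel (partial) summation to transfer the estimate of Lemma~\ref{lem1b} for the weighted sum $\sum_{m\le x}\sigma(30m)/(30m)$ into an estimate for the unweighted sum $\sum_{m\le x}\sigma(30m)$. Set $N=\lfloor x\rfloor$, $a_m=\sigma(30m)/(30m)$, and $A(t)=\sum_{m\le t}a_m=\beta_{0}t+g(t)$. Writing $\sigma(30m)=a_m\cdot 30m$ and using partial summation with $b_m=30m$ (so $b_{m+1}-b_m=30$) gives
$$\sum_{m\le N}\sigma(30m)=30N\,A(N)-30\sum_{m=1}^{N-1}A(m).$$
Substituting $A(t)=\beta_{0}t+g(t)$ and using $\sum_{m=1}^{N-1}m=N(N-1)/2$, the main terms collapse to
$$\sum_{m\le N}\sigma(30m)=15\beta_{0}N^{2}+15\beta_{0}N+30N\,g(N)-30\sum_{m=1}^{N-1}g(m),$$
so the problem reduces to bounding the two $g$-terms from below.

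For the boundary term, the lower bound $g(N)>-30\log 30N-30$ from Lemma~\ref{lem1b} gives $30N\,g(N)>-900\,N\log 30N-900\,N$. For the sum, the upper bound $g(m)<30\log 30m+32$ together with the elementary estimate $\sum_{m=1}^{N-1}\log m\le\int_{1}^{N}\log t\,dt=N\log N-N+1$ yields $-30\sum_{m=1}^{N-1}g(m)>-900\,N\log 30N-60\,N-O(1)$. Combining these with $15\beta_{0}N^{2}\ge 15\beta_{0}(x-1)^{2}\ge 15\beta_{0}x^{2}-30\beta_{0}x$ and replacing $N$ by $x$ inside the logarithms gives a lower bound of the shape
$$\sum_{m\le x}\sigma(30m)>15\beta_{0}x^{2}-1800\,x\log 30x-Cx$$
for an explicit absolute constant $C$; since $\beta_{0}=319\pi^{2}/1080<3$, one readily checks that $C<1050$.

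To conclude, the residual $-Cx$ must be absorbed into the gap $(2000-1800)x\log 30x=200\,x\log 30x$. Since $x\ge 1000$ forces $\log 30x\ge\log 30000>10$, one has $200\,x\log 30x\ge 2000\,x>Cx$, which delivers the claimed inequality with room to spare. The one thing to watch is the direction of the bounds on $g$: one needs its \emph{lower} bound for the boundary term $30N\,g(N)$ but its \emph{upper} bound for the subtracted sum $-30\sum g(m)$. This is bookkeeping rather than a genuine obstacle, and the hypothesis $x\ge 1000$ provides comfortable slack throughout.
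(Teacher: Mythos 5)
Your proposal is correct and follows essentially the same route as the paper: Abel summation applied to the asymptotic of Lemma \ref{lem1b}, using the lower bound on $g$ at the endpoint and the upper bound on $g$ inside the subtracted sum, with the slack $200x\log 30x$ at $x\ge 1000$ absorbing the linear error terms. The paper's bookkeeping is marginally cruder (it bounds $\sum_{m\le x-1}\log 30m$ by $(x-1)\log 30x$ rather than via the integral), but the constants close in the same way.
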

	
	\begin{proof} 
		Let $$ S(x)=\sum_{m \leq x} \frac{\sigma(30m)}{30m}. $$
		By \cite[Theorem 3.1]{Apostol},
		\begin{eqnarray*}
			\sum_{m \leq x} \sigma(30m)
			&=&\sum_{m \leq x} 30m(S(m)-S(m-1))\\
			&=&30[x]S([x])-\sum_{m \leq x-1} (30(m+1)-30m)S(m)-30S(0)\\
			&>&30(x-1)(\beta_{0} x-30\log 30x -32)-30\sum_{m \leq x-1}(\beta_{0} m+30\log 30m +32) \\
			&>&30\beta_{0} x^{2}-900x\log 30x -960x-15\beta_{0} x^{2}-900x\log 30x-960x\\
			&&\quad-30\beta_{0} x+900\log 30x+960\\
			&>&15\beta_{0} x^{2}-2000x\log 30x.
		\end{eqnarray*}
		
		This completes the proof of Lemma \ref{lem2b}.
	\end{proof}
	
	\begin{lemma}\label{lem2c} We have
		$$\sum_{m \leq x} \sigma(30m+k)<15\beta_{k} x^{2}+100x\log (30x+k).$$
	\end{lemma}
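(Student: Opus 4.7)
My plan is to mirror the proof of Lemma \ref{lem2b}, interchanging the roles of the upper and lower bounds throughout and using Lemma \ref{lem1c} in place of Lemma \ref{lem1b}. Let
$$T_k(x)=\sum_{m\le x}\frac{\sigma(30m+k)}{30m+k}.$$
First I would apply Abel's summation formula (\cite[Theorem 3.1]{Apostol}) to the telescoping identity $\sigma(30m+k)=(30m+k)\bigl(T_k(m)-T_k(m-1)\bigr)$, obtaining
$$\sum_{m\le x}\sigma(30m+k)=(30[x]+k)\,T_k([x])-30\sum_{m=1}^{[x]-1}T_k(m).$$

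Next I would invoke Lemma \ref{lem1c}, this time using the \emph{upper} estimate $T_k([x])<\beta_k[x]+\log(30[x]+k)+2$ for the boundary term and the \emph{lower} estimate $T_k(m)>\beta_k m-\log(30m+k)-1$ for each term of the subtracted bulk sum. The two main terms are $(30[x])(\beta_k[x])\approx 30\beta_k x^2$ from the boundary contribution and $-30\beta_k\sum_{m=1}^{[x]-1}m\approx -15\beta_k x^2$ from the bulk, which combine to produce the desired leading term $15\beta_k x^2$.

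All other contributions are of order $x\log(30x+k)$ or smaller. To fit them into the $100\,x\log(30x+k)$ budget I would use the crude estimate $\sum_{m=1}^{[x]-1}\log(30m+k)\le x\log(30x+k)$, and for residual linear-in-$x$ quantities such as $60x$, $k\beta_k x$, and $30([x]-1)$ I would exploit the standing hypothesis $x\ge 1000$, which yields $\log(30x+k)\ge\log 30000>10$ and therefore allows every $Cx$ term with modest $C$ to be absorbed into $x\log(30x+k)$.

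The one point requiring care is checking that the constant $100$ is actually large enough; that is essentially the only nontrivial step. The reason the constant here is dramatically smaller than the $2000$ of Lemma \ref{lem2b} is that $B_k(d)\le 1$ (in contrast to $B_0(d)\le 30$) propagates through every estimate in Lemma \ref{lem1c}, shrinking each error source by a factor of roughly $30$. With honest bookkeeping, the two principal error sources each contribute at most $30\,x\log(30x+k)$, and the residual linear and constant terms occupy well under $40\,x\log(30x+k)$ once $x\ge 1000$, so the stated bound holds with ample room.
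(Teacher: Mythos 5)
Your proposal matches the paper's proof essentially step for step: the same partial-summation identity for $\sum_{m\le x}\sigma(30m+k)$ in terms of $T_k$, the upper bound of Lemma \ref{lem1c} on the boundary term and the lower bound on the bulk sum, cancellation of the leading terms to $15\beta_k x^2$, and absorption of the remaining $O(x\log(30x+k))$ and $O(x)$ contributions (using $x\ge 1000$) into the constant $100$. The approach and the bookkeeping are correct and not materially different from the paper's.
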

	
	\begin{proof} 
		Let $$ T_{k}(x)=\sum_{m \leq x} \frac{\sigma(30m+k)}{30m+k}. $$
		By \cite[Theorem 3.1]{Apostol},
		\begin{eqnarray*}
			\sum_{m \leq x} \sigma(30m+k)
			&=&\sum_{m \leq x} (30m+k)(T_{k}(m)-T_{k}(m-1))\\
			&=&(30[x]+k)T_{k}([x])-\sum_{m \leq x-1} (30(m+1)+k-30m-k)T_{k}(m)\\
			&<&(30x+k)(\beta_{k} x+\log (30x+k) +2)-30\sum_{m \leq x-1}(\beta_{k} m-\log (30m+k) -2) \\
			&<&30\beta_{k} x^{2}+30x\log (30x+k) +60x-15\beta_{k} (x-2)^{2}+30x\log (30x+k)\\
			&&\quad+60x+k\beta_{k} x+k\log (30x+k) +2k\\
			&<&15\beta_{k} x^{2}+100x\log (30x+k).
		\end{eqnarray*}
		
		This completes the proof of Lemma \ref{lem2c}.
	\end{proof}

\section{Proof of Theorem \ref{thm1}}

\begin{proof}[Proof of Theorem \ref{thm1}] For every integer $ 1\leq k\leq 29,\ (k,30)=1, $ by Lemmas \ref{lem2b} and \ref{lem2c}, we have $$\sum_{m \leq x} \sigma(30m)>15\beta_{0} x^{2}-2000x\log 30x>15\beta_{k} x^{2}+100x\log (30x+k)>\sum_{m \leq x} \sigma(30m+k)$$
provided that $ x\geq 1000. $ For every integer $ 1\leq k\leq 29,\ (k,30)=1, $ it is easy to verify that $$\sum_{m \leq K} \sigma(30m)>\sum_{m \leq K} \sigma(30m+k)$$
for every positive integer $ K<1000 $ by programming.

This completes the proof of Theorem \ref{thm1}.
\end{proof}

	\bigskip
	
	\section*{Acknowledgments}
	The author would like to thank the referee and Yuchen Ding for their helpful suggestions.

\end{document}